\theoremstyle{plain}
\newtheorem*{theorem}{Theorem}
\newtheorem{proposition}{Proposition}
\newtheorem*{lemma}{Lemma}
\theoremstyle{definition}
\newtheorem*{example}{Example}
\newtheorem*{construction}{Construction}
\theoremstyle{remark}
\newtheorem*{remark}{Remark}
\def \ve {\mathbf e}
\def \vt {\mathbf t}
\def \vu {\mathbf u}
\def \vv {\mathbf v}
\def \vx {\mathbf x}
\def \vy {\mathbf y}
\def \RR {\mathbb R}
\def \ZZ {\mathbb Z}
\DeclareMathOperator{\supp}{Supp}
\DeclareMathOperator{\conv}{Conv}
\begin{document}

\title{Selectively Balancing Unit Vectors}
\author{Aart Blokhuis}
\email{a.blokhuis@tue.nl}
\author{Hao Chen}
\email{hao.chen.math@gmail.com}
\address{Departement of Mathematics and Computer Science, Technische Universiteit Eindhoven}

\keywords{Balancing vectors}
\subjclass[2010]{Primary 52C07; Secondary 52A38}

\begin{abstract}
	A set $U$ of unit vectors is selectively balancing if one can find two
	disjoint subsets $U^+$ and $U^-$, not both empty, such that the Euclidean
	distance between the sum of $U^+$ and the sum of $U^-$ is smaller than $1$.
	We prove that the minimum number of unit vectors that guarantee a selectively
	balancing set in $\RR^n$ is asymptotically $\frac{1}{2} n \log n$.
\end{abstract}

\thanks{
	H. Chen is supported by the Deutsche Forschungsgemeinschaft within the
	Research Training Group `Methods for Discrete Structures' (GRK 1408) and by
	NWO/DIAMANT grant number 613.009.031.
}
\maketitle

A set of unit vectors $U=\{\vu_1, \dots, \vu_m\}$ is said to be
\emph{selectively balancing} if there is a non-trivial linear combination $\vv =
\sum \varepsilon_i \vu_i$ with coefficients $\varepsilon_i \in \{-1, 0, 1\}$
such that the Euclidean norm $\| \vv \| < 1$.  In other words, $U$ is
selectively balancing if one can select two disjoint subsets $U^+$ and $U^-$,
not both empty, such that
\[
	\| \sum_{U^+}\vu - \sum_{U^-}\vu \| < 1.
\]
Note that the inequality must be strict for the problem to be nontrivial.
Otherwise, one could always balance $U$ by choosing the coefficients to be zero
for all but one unit vector.

The term ``balancing'' refers to the classical vector balancing problems, which
typically try to assign coefficients $\pm 1$ to vectors so that the signed sum
has a small norm.  Various norms could be considered for balancing vectors, and
different conditions can be imposed on the vectors; see e.g.~\cite{spencer1977,
spencer1981, barany1981, spencer1986, banaszczyk1993, giannopoulos1997,
banaszczyk1998, swanepoel2000}.  The coefficient $0$ is usually not considered,
despite its appearance in the powerful Partial Coloring Method
(see~\cite{beck1981, spencer1985} and~\cite[\S 4.5, 4.6]{matousek1999}).  In
this note, we try to balance unit vectors with Euclidean norms, and allow the
sign to be $0$.  In other words, one could abandon some (not all) vectors,
hence the term ``selectively''.

Let $\sigma(n)$ be the minimum integer $m$ such that any $m$ unit vectors in
$\RR^n$ are selectively balancing.  Our main result is

\begin{theorem}
	\[
		\sigma(n) \sim \frac{1}{2} n \log n.
	\]
	In other words, for any two constants $c_1 > 1/2 > c_2$, we have $c_1 n \log
	n > \sigma(n) > c_2 n \log n$ for sufficiently large $n$.
\end{theorem}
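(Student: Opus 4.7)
I split the theorem into its upper bound $\sigma(n) < (1/2+\varepsilon)n\log n$ and its lower bound $\sigma(n) > (1/2-\varepsilon)n\log n$, and would treat them by separate methods: a Markov--packing pigeonhole for the upper bound, and a random unit-vector construction for the lower bound.

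\noindent\textbf{Upper bound.} Given unit vectors $\vu_1,\dots,\vu_m$ with $m \ge (1/2+\varepsilon)n\log n$, consider the $2^m$ signed sums $T_\eta=\sum_i \eta_i\vu_i$ for $\eta\in\{-1,+1\}^m$. A uniformly random $\eta$ satisfies $\mathbb{E}\|T_\eta\|^2=m$, so Markov's inequality places at least $2^{m-1}$ of them inside $B(\mathbf{0},\sqrt{2m})$. If some pair $\eta\ne\eta'$ satisfies $\|T_\eta-T_{\eta'}\|<2$, then $\varepsilon_i:=(\eta_i-\eta'_i)/2\in\{-1,0,+1\}$ is a non-trivial selection with $\|\sum_i\varepsilon_i\vu_i\|<1$, proving selective balancing. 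Otherwise the $2^{m-1}$ sums are $2$-separated, so disjoint unit balls about them pack into $B(\mathbf{0},\sqrt{2m}+1)$, forcing the volume inequality $2^{m-1}\le (\sqrt{2m}+1)^n$; a direct check shows this fails for the chosen $m$ once $n$ is large, contradicting the assumption of no selective balancing.

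\noindent\textbf{Lower bound.} I would take $\vu_1,\dots,\vu_m$ i.i.d.\ uniform on $S^{n-1}$ with $m=\lfloor(1/2-\varepsilon)n\log n\rfloor$, and bound the expected number of non-trivial $\varepsilon\in\{-1,0,+1\}^m$ that violate the inequality. By the rotational symmetry of each $\vu_i$, the probability $\Pr[\|\sum_i\varepsilon_i\vu_i\|<1]$ depends only on $k:=|\supp\varepsilon|$; call it $p_k$, so that $p_1=0$. The key input is an estimate of the form $p_k\lesssim(e/k)^{n/2}/\sqrt{n}$ for $k\ge 2$: for $k\gg n$ this follows from the Gaussian approximation $\sum_{i=1}^k\vu_i\approx N(\mathbf{0},(k/n)I)$, made rigorous via characteristic functions or via the isotropic recursion $\|T_{k+1}\|^2=\|T_k\|^2+1+2\|T_k\|\langle \hat T_k,\vu_{k+1}\rangle$; for small $k$ one works directly with the density $\propto(1-t^2)^{(n-3)/2}$ of pairwise inner products. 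The union bound $\sum_{k\ge 2}\binom{m}{k}2^k p_k$ then has to be shown to be $o(1)$, which amounts to an entropy-vs-decay inequality in $\alpha=k/m$.

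\noindent\textbf{Main obstacle.} I expect the sharp constant $1/2$ in the lower bound to be the real difficulty. Since $\binom{m}{k}2^k$ peaks not at $k=m$ but at $k=2m/3$ (with value of order $3^m$), a naive Gaussian bound on $p_k$ may only yield a constant strictly smaller than $1/2$, and recovering $1/2$ requires either a delicate optimization in $\alpha$ or a sharper large-deviation estimate that exploits the fact that $\|\sum\vu_i\|$ is more concentrated than its Gaussian surrogate. A secondary technicality is the small-$k$ regime where the Gaussian heuristic breaks down, but those terms can be dispatched by direct spherical integration. The upper bound, by contrast, is essentially forced once one observes the Markov concentration of signed sums, and I expect its proof to proceed as sketched above almost verbatim.
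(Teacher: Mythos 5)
Your upper bound is correct and is a genuinely different route from the paper's. The paper covers the zonotope generated by $U$ (which contains all $2^m$ subset sums) by cubes of side $1/\sqrt{n+1}$, bounding its volume by $\binom{m+2n}{n}$ via a dissection into parallelepipeds spanned by unit vectors; you instead keep only the $2^{m-1}$ signed sums of norm at most $\sqrt{2m}$ (Markov applied to $\mathbb{E}\|T_\eta\|^2=m$) and pack unit balls. Both are volume--pigeonhole arguments, both get the constant $1/2$ from comparing $2^m$ against an $n$-dimensional volume of order $n^{n/2}$, and your final inequality $2^{m-1}\le(\sqrt{2m}+1)^n$ does indeed fail for $m=(1/2+\varepsilon)n\log n$ and large $n$, since the right-hand side is $2^{\frac12 n\log n+O(n\log\log n)}$. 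This half of your proposal is complete and, if anything, shorter than the paper's.

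The lower bound is where the proposal breaks, and the obstacle you flag in your last paragraph is structural, not technical. For $k\gg n$ the local central limit theorem gives a \emph{two-sided} estimate $p_k=(e/k)^{n/2}e^{O(n/k)+O(\log n)}$ (more precisely $p_k$ behaves like $\bigl((k+1)^{k-1}/k^k\bigr)^{n/2}$ up to polynomial factors, and $(k+1)^{k-1}/k^k=(e/k)e^{-O(1/k)}$), so the hoped-for ``sharper large-deviation estimate exploiting extra concentration'' can only improve the exponent by $O(n/k)=O(1/\log n)$ per coordinate --- negligible against $n\log n$. The union bound at its peak $k=2m/3$ is therefore $3^{m(1+o(1))}\cdot 2^{-\frac12 n\log n(1+o(1))}$, which diverges for every $c_2>1/(2\log_2 3)\approx 0.315$. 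So the first-moment method proves $\sigma(n)>c_2\,n\log n$ only for $c_2<1/(2\log_2 3)$, not for all $c_2<1/2$. Worse, the expected number of balancing selections then grows like $2^{\Theta(n\log n)}$, and one expects a second-moment argument to show that i.i.d.\ uniform unit vectors really \emph{are} selectively balancing with high probability once $m$ exceeds roughly $n\log n/(2\log_2 3)$; the bottleneck is the random construction itself, and no refinement of its analysis can reach $1/2$.

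The paper avoids randomness entirely and this is the idea you are missing. Identify the coordinates of $\RR^{L^d}$ with the lattice points of $[1,L]^d$, choose $4^k$ points $S\subset\ZZ^d$ lying on a common sphere (hence in strictly convex position), nest subsets $S_0\subset\dots\subset S_k=S$ with $|S_i|=4^i$, and take the unit vectors $\vu_{\vt,i}=2^{-i}\sum_{\vy\in S_i}\ve_{\vt+\vy}$ over all admissible translations $\vt$. In any nontrivial $\{-1,0,1\}$-combination, let $j$ be the largest level that appears: the levels $i<j$ contribute, in every coordinate, a multiple of $2^{-i}$ and hence of $2^{1-j}$, while level $j$ contributes exactly $\pm 2^{-j}$ in at least $4^j$ coordinates --- this is guaranteed by a ``lonely point'' lemma stating that each translate in a union of translated copies of a strictly convex set owns a point not covered by any other translate. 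These contributions cannot cancel $2$-adically, so at least $4^j$ coordinates have absolute value at least $2^{-j}$ and the norm is at least $1$. Optimizing $d$, $L$ and the choice of $S$ (lattice points on a sphere obtained by pigeonhole in $[-D,D]^d$) yields $(\tfrac12-\varepsilon)n\log n$ such vectors in $\RR^n$.
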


\begin{remark}
	The initial motivation for our investigation is a seemingly unrelated topic:
	the dot product representation of cube graphs.  A \emph{dot product
	representation}~\cite{fiduccia1998} of a graph $G=(V,E)$ is a map $\rho
	\colon V \to \RR^n$ such that $\langle \rho(u),\rho(v) \rangle \ge 1$ if and
	only if $uv \in E$.  It was conjectured~\cite{li2014} that the $(n+1)$-cube
	has no dot product representation in $\RR^n$, but was disproved by the second
	author~\cite{chen2014}.  Our construction could be modified to give dot
	product representations of $(c n \log n)$-cubes in $\RR^n$.  See the remark
	at the end for the general idea.
\end{remark}

For convenience, we take the base of the logarithm as $2$.  For two sets $A,B
\subset \RR^n$, $A+B$ denotes the Minkowski sum, i.e.\ $A + B = \{ \mathbf{a} +
\mathbf{b} \mid \mathbf{a} \in A, \mathbf{b} \in B\}$.  We will not distinguish
a set consisting of a single vector from the vector itself.

The proof of the theorem is presented in two propositions.

\begin{proposition}\label{prop:O}
	Let $c_1 > 1/2$ be a constant.  Then for sufficiently large $n$, any set of
	$c_1 n \log n$ unit vectors in $\RR^n$ is selectively balancing.
\end{proposition}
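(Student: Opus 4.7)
The plan is a volume/pigeonhole argument on the $2^m$ subset sums of the $m = \lceil c_1 n \log n \rceil$ unit vectors $\vu_1,\dots,\vu_m$. For each subset $A \subseteq \{1,\dots,m\}$, let $s_A = \sum_{i \in A} \vu_i$. If two subsets $A \ne B$ satisfy $\|s_A - s_B\| < 1$, then taking $U^+ = \{\vu_i : i \in A \setminus B\}$ and $U^- = \{\vu_i : i \in B \setminus A\}$ produces a selectively balancing partition, since $s_A - s_B = \sum_{A\setminus B} \vu_i - \sum_{B\setminus A} \vu_i$ has norm less than $1$ and $(A\setminus B, B\setminus A)$ is not both empty. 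So it suffices to show that, for $c_1 > 1/2$ and $n$ large, we cannot pack all $2^m$ subset sums with pairwise distances $\ge 1$.

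The first step is a concentration estimate to confine most subset sums to a small ball around the centroid $\tfrac{1}{2}\sum_i \vu_i$. Picking $\varepsilon_i \in \{0,1\}$ independently and uniformly,
\[
  \mathbb{E}\,\bigl\| \textstyle\sum_i \varepsilon_i \vu_i - \tfrac{1}{2}\sum_i \vu_i \bigr\|^2
  = \mathbb{E}\,\bigl\| \textstyle\sum_i (\varepsilon_i - \tfrac{1}{2})\vu_i \bigr\|^2
  = \sum_i \tfrac{1}{4}\|\vu_i\|^2 = \tfrac{m}{4},
\]
since the cross terms vanish by independence. By Markov's inequality, at least half of the $2^m$ subsets satisfy $\|s_A - \tfrac{1}{2}\sum_i \vu_i\| \le \sqrt{m/2}$. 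Call this family $\mathcal{F}$, with $|\mathcal{F}| \ge 2^{m-1}$; all points $s_A$ with $A \in \mathcal{F}$ lie inside a single ball $B$ of radius $R = \sqrt{m/2}$.

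Now the packing step: suppose for contradiction that all pairwise distances among $\{s_A : A \in \mathcal{F}\}$ are at least $1$. Then the open balls of radius $1/2$ centered at these points are pairwise disjoint, and all lie inside the concentric ball of radius $R + 1/2$. Comparing volumes (the unit-ball volume constant cancels),
\[
  2^{m-1}\,\bigl(\tfrac{1}{2}\bigr)^n \;\le\; \bigl(\sqrt{m/2} + \tfrac{1}{2}\bigr)^n.
\]
Taking logarithms, this gives $m - 1 - n \le n \log\bigl(\sqrt{m/2} + \tfrac{1}{2}\bigr)$. For $m = c_1 n \log n$, the right-hand side is $\tfrac{n}{2}\log m + O(n) = \tfrac{n}{2}\log n + o(n \log n)$, while the left-hand side is $c_1 n \log n - O(n)$. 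Since $c_1 > 1/2$, the inequality fails for sufficiently large $n$, which is the desired contradiction.

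The main obstacle is that the packing argument must be tight enough to reach the threshold $c_1 > 1/2$ rather than some larger constant. Two ingredients are essential: first, the centroid concentration (without it, the enclosing ball would have radius of order $m$ instead of $\sqrt{m}$, costing a factor of $2$ in $c_1$); second, using the $2^m$ signed subsets in $\{0,1\}^m$ rather than $3^m$ signed sums in $\{-1,0,1\}^m$ directly, because only differences of $\{0,1\}$-combinations stay in $\{-1,0,1\}$. Everything else, including the Markov step and the volume comparison, is a calibration check; the asymptotic margin between $c_1 n \log n$ and $\tfrac{n}{2}\log n$ absorbs the lower-order error terms.
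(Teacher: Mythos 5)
Your proof is correct, and it reaches the statement by a genuinely different route from the paper's. Both arguments share the same opening move (two binary combinations $s_A, s_B$ with $A \ne B$ at distance $<1$ yield a $\{-1,0,1\}$-combination of norm $<1$), but the counting mechanisms differ. The paper covers the zonotope $Z$ generated by $U$ --- which contains all $2^m$ binary combinations --- by lattice translates of the unit cube, bounds their number by the volume of $Z + 2Q$ via the dissection of a zonotope into at most $\binom{m+2n}{n}$ parallelepipeds of volume at most $1$, and then subdivides each unit cube into $(n+1)^{n/2}$ subcubes of diameter $<1$ before pigeonholing; the dominant factor $(n+1)^{n/2}$ comes from the $\sqrt{n}$ diameter-to-side ratio of the cube. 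You instead use a second-moment computation plus Markov to confine $2^{m-1}$ of the subset sums to a ball of radius $\sqrt{m/2}$ about the centroid $\tfrac12\sum_i \vu_i$, and then a sphere-packing volume comparison in which the unit-ball constant cancels; your dominant factor $(\sqrt{2m}+1)^n$ comes from the $\sqrt{m} \approx \sqrt{n\log n}$ concentration radius. Both sources contribute $\tfrac{n}{2}\log n + o(n\log n)$ to the logarithm of the count, so both hit the same threshold $c_1 > 1/2$. What your version buys is self-containedness: it needs no facts about zonotope dissections, only independence of the $\varepsilon_i$ and a volume comparison of balls, and the centroid-concentration step is exactly what replaces the paper's tight volume bound on $Z^{++}$ (without it you would only reach $c_1 > 1$, as you correctly note). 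One small point of hygiene: your contradiction hypothesis should be stated as $\|s_A - s_B\| \ge 1$ for all distinct $A, B \in \mathcal{F}$, which in particular forces the points to be distinct so that the $2^{m-1}$ balls of radius $1/2$ are genuinely pairwise disjoint; coincident subset sums are already a (trivial) win for the balancing claim.
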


\begin{proof}
	Let $Q \subset \RR^n$ denote the unit cube $[-1/2, 1/2]^n$, and $Z$ be the
	zonotope generated by $U = \{ \vu_1, \dots, \vu_m \}$.  That is,
	\[
		Z=\left\{ \sum_{i=1}^m \lambda_i \vu_i \mid 0 \le \lambda_i \le 1 \right\}.
	\]
	In particular, $Z$ contains all the binary combinations of $U$, i.e.\ linear
	combinations with coefficients $0$ or $1$.

	Let $\vv_1$ and $\vv_2$ be two distinct binary linear combinations.  If the
	Euclidean distance $\| \vv_1 - \vv_2 \| < 1$, then $\vv = \vv_1 - \vv_2$ is a
	non-zero linear combination of $U$ with coefficients $-1$, $0$ or $1$, and
	$\| \vv \| < 1$, hence $U$ is selectively balancing by definition.  Our plan
	is to prove that there exist two distinct binary combinations of $U$ at
	Euclidean distance $<1$ if $m = c_1 n \log n$.

	Let $Z^+ = Z + Q$ and $Z^{++} = Z^+ + Q$.  Consider the translated unit cubes
	$\{ Q + \vt \mid \vt \in \ZZ^n \cap Z^+ \}$.  They are contained in $Z^{++}$
	with disjoint interiors, and form a covering of $Z$.  The number of the
	cubes, which is the cardinality of $\ZZ^n \cap Z^+$, is bounded from above by
	the volume of $Z^{++}$.

	A zonotope can be dissected into parallelepipeds generated by linearly
	independent subsets of its generator; see~\cite[\S 5]{shephard1974}
	and~\cite[\S~9.2]{beck2015}.  The volume of a parallelepiped generated by
	unit vectors is at most $1$.  Since $Z^{++}$ is (up to a translation) generated
	by $m+2n$ unit vectors, its volume is at most 
	\[
		{m+2n \choose n} < (e (\alpha+2))^n,
	\]
	where $\alpha=m/n$ and we have used Stirling's formula.  We then subdivide
	each unit cube into $(n+1)^{n/2}$ cubes of side length $1/\sqrt{n+1}$, and
	estimate, very generously, at most $(e (\alpha+2) \sqrt{n+1})^n$ cubes of
	side length $1/\sqrt{n+1}$ with disjoint interiors.  These cubes cover $Z$.
	Inside a cube of side length $1/\sqrt{n+1}$, the Euclidean distance between
	any two points is $<1$.
	
	By the pigeonhole principle and the discussion before, $U$ must be
	selectively balancing if $2^m > (e(\alpha+2)\sqrt{n+1})^n$.  If $m = \alpha n
	= c_1 n \log n$ with $c_1 > 1/2$, this condition is satisfied for
	sufficiently large $n$.
\end{proof}

The proof is illustrated in Figure~\ref{fig:O}.

\begin{figure}
	\includegraphics[width=.4\textwidth]{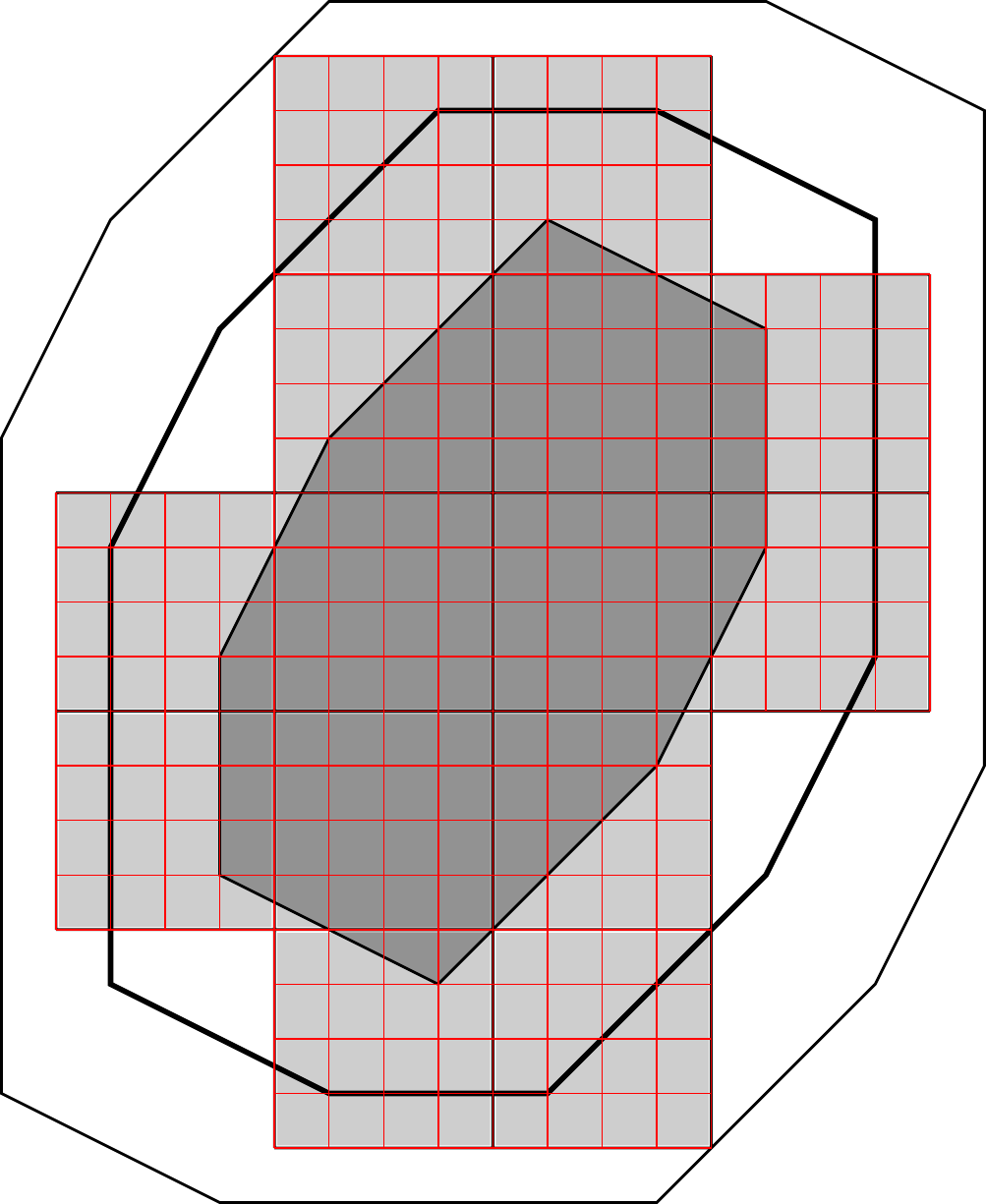}
	\caption{Proof of Proposition~\ref{prop:O}.\label{fig:O}}
\end{figure}

In order to better explain our construction for the second half of the theorem,
we would like to present an example first.

\begin{example}
	In Figure~\ref{fig:example} is an $5 \times 5$ integer lattice.  We identify
	the $25$ lattice points to the coordinates of $\RR^{25}$.  Consider two types
	of unit vectors: the first are the basis vectors; the second are half of the
	sum of the four basis vectors corresponding to the neighbors of a lattice
	point.  An example is given for each type in Figure~\ref{fig:example}, with
	the radius of the circle proportional to the component of the vector in the
	corresponding coordinate.  There are $25$ vectors of the first type, and $9$
	vectors of the second type, hence $34$ unit vectors in total.
	
	Consider a non-trivial linear combination $\vv$ of the $34$ unit vectors with
	coefficients $-1$, $0$ or $1$.  If it only involves vectors of the first
	type, we have obviously $\|\vv\| \ge 1$.  If it only involves vectors of the
	second type, one verifies that $\vv$ has absolute value $1/2$ in at least
	four coordinates, hence again $\|\vv\| \ge 1$.  If both types are involved,
	the $1/2$'s created by vectors of the second type won't be canceled by the
	integers created by basis vectors, as illustrated in
	Figure~\ref{fig:example}.  Therefore $\vv$ has absolute value $\ge 1/2$ in at
	least four coordinates, hence again $\| \vv \| \ge 1$.  We then conclude that
	these vectors are not selectively balancing.
\end{example}

\begin{figure}[hb]
	\includegraphics[width=.4\textwidth]{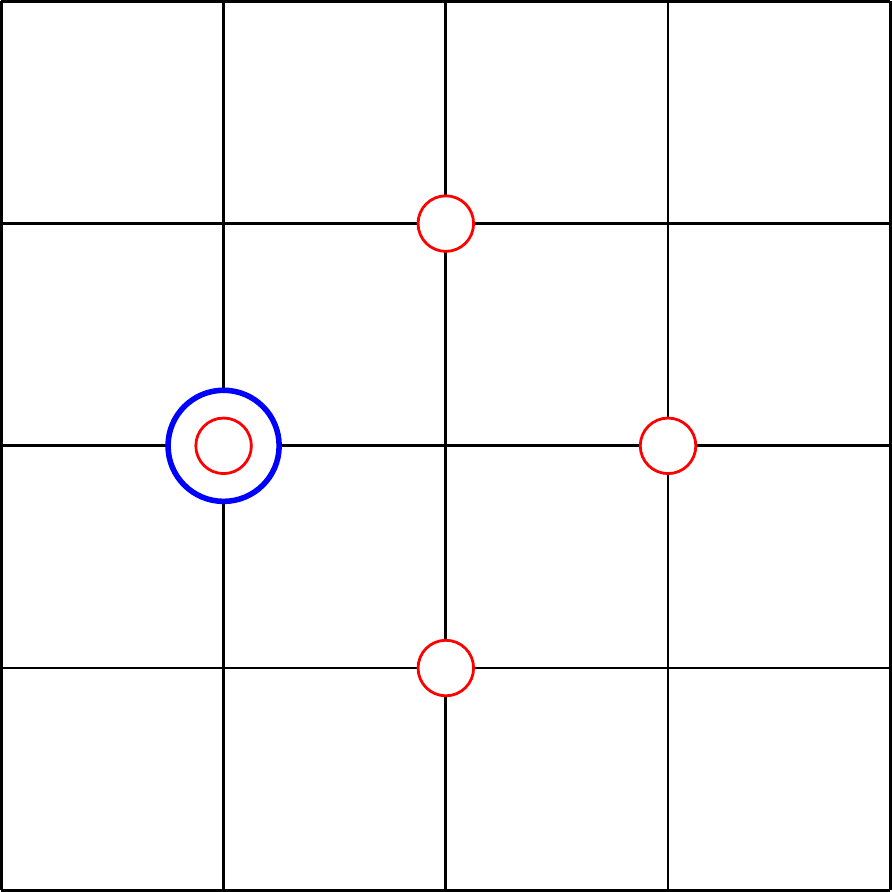}
	\caption{\label{fig:example}Construction of $34$ unit vectors in $\RR^{25}$ that are not selectively balancing.}
\end{figure}

Our construction is a generalization of the example.

We need the following lemma.  Let $C \subset \RR^d$ be a set of points in
strictly convex position, i.e.\ every point of $C$ is a vertex of the convex
hull $\conv(C)$.  Let $T$ be a finite collection of translation vectors.  Then
$C+T$ is the union of translated copies of $C$.  A point $\vx \in C + T $ is
\emph{lonely} if there is a unique $\vt \in T$ such that $\vx \in C + \vt$.

\begin{lemma}\label{lem:lonely}
	For each $\vy \in C$, there is a lonely point $\vx = \vy + \vt$.
\end{lemma}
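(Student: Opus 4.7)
The plan is to exploit the hypothesis that $\vy$ is a vertex of $\conv(C)$: at such a vertex there is a supporting hyperplane that touches $C$ only at $\vy$. Concretely, since $\vy$ is an extreme point of the (finite) set $C$, there exists a linear functional $f \colon \RR^d \to \RR$ whose restriction to $C$ attains its maximum uniquely at $\vy$. This is the only place where strict convex position enters.

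Next, using finiteness of $T$, I would pick $\vt^\ast \in T$ that maximizes $f$ on $T$ (ties are allowed at this stage). My candidate for a lonely point is $\vx^\ast = \vy + \vt^\ast$. To verify loneliness, suppose $\vx^\ast \in C + \vt'$ for some $\vt' \in T$, i.e.\ $\vy + \vt^\ast = \vy' + \vt'$ for some $\vy' \in C$. Applying $f$ gives $f(\vy) - f(\vy') = f(\vt') - f(\vt^\ast)$. The left side is $\ge 0$ by the maximality of $f(\vy)$ on $C$, while the right side is $\le 0$ by the maximality of $f(\vt^\ast)$ on $T$, so both sides vanish. Then $f(\vy') = f(\vy)$ forces $\vy' = \vy$ (uniqueness of the max on $C$), after which $\vy + \vt' = \vy + \vt^\ast$ gives $\vt' = \vt^\ast$. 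Hence $\vx^\ast$ lies in a unique translate $C + \vt$ and is lonely.

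There is no real obstacle; the only subtle point is that we do not need the maximizer of $f$ on $T$ to be unique. Any tie among translations $\vt' \in T$ with $f(\vt') = f(\vt^\ast)$ is automatically resolved by the uniqueness of the maximum of $f$ on $C$, since once $\vy' = \vy$ is forced, the equality $\vy + \vt' = \vy + \vt^\ast$ determines $\vt'$. Strict convex position of $C$ is therefore doing all the work, and $T$ only needs to be finite to guarantee the existence of $\vt^\ast$.
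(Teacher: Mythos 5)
Your proof is correct and follows essentially the same route as the paper: both arguments use a linear functional $f$ that is uniquely maximized on $C$ at $\vy$ and then take a maximizer of $f$ over the translates (your $\vy+\vt^\ast$ is exactly a point of $C+T$ maximizing $f$, which is what the paper picks directly). Your explicit remark that ties among the $\vt'\in T$ are harmless is a nice clarification but does not change the argument.
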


\begin{proof}
	Since $C$ is strictly convex, there is a linear function $f \in (\RR^d)^*$
	such that $f(\vy) > f(\vy')$ for any $\vy \ne \vy' \in C$.  We then take any
	$\vx \in C + T$ that maximizes $f$.  By construction, $\vx \in C + \vt$ for
	some $\vt \in T$.  We then have $\vx = \vy + \vt$; otherwise, if $\vx = \vy'
	+ \vt$ for some $\vy' \ne \vy$, then $f(\vy + \vt) > f(\vy' + \vt)$,
	contradicting the maximality of $\vx$.  Since the translation $\vt = \vx -
	\vy$ is uniquely determined, $\vx$ is a lonely point.
\end{proof}

\begin{construction}
	Assume that $S \subset \ZZ^d$ contains at least $4^k$ integer points with the
	same Euclidean norm $R$, hence necessarily strictly convex.  Let $r=\max_{\vu
	\in S}\|\vu\|_\infty$ and $L > 2r$.  We now construct a set $U$ of
	$(k+1)(L-2r)^d$ unit vectors in $\RR^{L^d}$ that is not selectively
	balancing.

	We identify the basis vectors of $\RR^{L^d}$ to the integer points in
	$[1,L]^d$; the basis vector corresponding to $\vx \in [1,L]^d$ is denoted by
	$\ve_\vx$, and $\vv_\vx = \langle \vv, \ve_\vx \rangle$ denotes the component
	of $\vv$ in the $\vx$-coordinate.

	Let $S_0 \subset S_1 \subset S_2 \subset \dots \subset S_k = S$ be subsets of
	$S$ such that $|S_i|=4^i$ for $1 \le i \le k$.  A unit vector in our set $U$
	is labeled by an integer point $\vt \in [1+r,L-r]^d$ and an integer $i \in
	[0,k]$, and is defined by
	\[
		\vu_{\vt,i} = 2^{-i}\sum_{\vy \in S_i} \ve_{\vt+\vy}.
	\]
	Note that, by assumption, $[1+r,L-r]^d + S \subseteq [1,L]^d$, so $U \subset
	\RR^{L^d}$.  We now verify that $U$ is not selectively balancing.

	Let $\vv$ be a non-trivial linear combination of $\vu_{\vt,i}$ with coefficients
	$\varepsilon_{\vt,i} \in \{-1, 0, 1\}$.
	
	For a fixed $i \in [0,k]$, define $\vv_i=\sum \varepsilon_{\vt,i}
	\vu_{\vt,i}$ and
	\[
		\supp_i(\vv) = \{\vt \in [1+r,L-r]^d \mid \varepsilon_{\vt,i} \ne 0\}.
	\] 
	In every coordinate $\vx \in [1,L]^d$, the component $(\vv_i)_\vx$ is an
	integer multiple of $2^{-i}$.  By Lemma~\ref{lem:lonely}, for each $\vy \in
	S_i$, $S_i + \supp_i(\vv)$ has a lonely point $\vx = \vy + \vt$.  Therefore,
	in at least $4^i$ coordinates $\vx \in [1,L]^d$, we have $(\vv_i)_\vx = \pm
	2^{-i}$.

  Define
	\[
		j=\max\{i \in [0,k] \mid \supp_i(\vv) \ne \emptyset\},
	\]
	so $\vv = \vv_0 + \dots + \vv_j$.  For each $i < j$, $(\vv_i)_\vx$ is a
	multiple of $2^{-i}$, hence a multiple of $2 \times 2^{-j}$, in every
	coordinate $\vx \in [1,L]^d$.  Since $(\vv_j)_\vx = \pm 2^{-j}$ in at least
	$4^j$ coordinates, we have $| \vv_\vx | \ge 2^{-j}$ in these coordinates.  So
	$\| \vv \| \ge 1$.
\end{construction}

The second half of the theorem is proved by adjusting the parameters in the
construction.

\begin{proposition}\label{prop:Omega}
	Let $c_2 < 1/2$ be a constant.  Then for sufficiently large $n$, there are $c_2
	n \log n$ unit vectors in $\RR^n$ that are not selectively balancing.
\end{proposition}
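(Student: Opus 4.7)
The plan is to instantiate the Construction with parameters $d$, $L$, $r$ and a sphere $S \subset \ZZ^d$ that are tuned to $c_2$ and $n$. Since $|U| = (k+1)(L-2r)^d$ and the ambient dimension is $L^d$, roughly we want $L^d \approx n$, $(L-2r)^d \approx L^d$, and $k \approx c_2 \log n$, so that $|U| \gtrsim c_2\, n \log n$. Having such $U$ in $\RR^{L^d}$ yields the same conclusion in $\RR^n$ by embedding $\RR^{L^d} \hookrightarrow \RR^n$ into the first $L^d$ coordinates, which preserves the norm of every linear combination.

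The sphere $S$ is produced by pigeonhole. The $(2r+1)^d$ integer points in $[-r,r]^d$ have at most $dr^2+1$ distinct squared Euclidean norms, so some common norm is shared by at least $(2r+1)^d/(dr^2+1)$ of them. A polynomial scaling $r = \lceil d^b \rceil$ with $b>0$ then yields
\[
   \log_2 |S| \;\geq\; d \log_2(2r+1) - \log_2(dr^2+1) \;=\; bd\log_2 d\,(1-o(1)),
\]
so we may run the Construction with $k = \lfloor \log_4 |S| \rfloor \geq \tfrac{b}{2}\, d\log_2 d\,(1-o(1))$ (after passing to a $4^k$-subset of the sphere).

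For the parameters, fix any $b > 2c_2/(1-2c_2)$, which is finite since $c_2 < 1/2$, and then any $\gamma \in \bigl(2c_2/b,\; 1/(b+1)\bigr)$; this interval is non-empty precisely because $b(1-2c_2) > 2c_2$. For each large $n$, let $d$ be the largest integer with $d \log_2 d \leq \gamma \log_2 n$, and set $L = \lfloor n^{1/d} \rfloor$ and $r = \lceil d^b \rceil$. The upper bound $\gamma < 1/(b+1)$ translates into $n^{1/d} \gg d^{b+1}$, which gives $dr/L = o(1)$ and therefore both $L^d/n \to 1$ and $(L-2r)^d/L^d \to 1$; the lower bound $\gamma > 2c_2/b$ is equivalent to $b/(2\gamma) > c_2$. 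The Construction then produces
\[
   |U| \;=\; (k+1)(L-2r)^d \;\geq\; \tfrac{b}{2\gamma}\, n \log_2 n\,(1-o(1)) \;>\; c_2\, n \log_2 n
\]
unit vectors in $\RR^{L^d}$ that are not selectively balancing, and embedding into $\RR^n$ preserves this property.

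The delicate step is the joint parameter choice: $r$ must be large, polynomial in $d$, so that the sphere carries enough lattice points to bring $k$ up to order $\log n$, yet small compared to $L$ so that the boundary layer $[1,L]^d \setminus [1+r,L-r]^d$ does not swallow too many translates. The inequality $b(1-2c_2) > 2c_2$ needed to reconcile these two demands is solvable exactly when $c_2 < 1/2$, which explains why $\tfrac12 n \log n$ is the right threshold.
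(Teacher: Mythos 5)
Your proposal is correct and follows essentially the same route as the paper: both instantiate the Construction with a sphere $S$ obtained by pigeonholing the lattice points of a box over their (at most polynomially many) squared norms, and then tune $r$, $L$, $d$ so that $k+1 \gtrsim c_2 \log n$ while the boundary loss $(L-2r)^d/L^d$ and the rounding loss $L^d/n$ are both $1-o(1)$. The paper takes $r = D = 2^d$ and $L = \lfloor D^{\lambda}\rfloor$ and then interpolates between the special dimensions $\lfloor 2^{\mu d}\rfloor^d$; your polynomial scaling $r = \lceil d^b\rceil$, $L = \lfloor n^{1/d}\rfloor$ reaches all large $n$ directly, which is a mild simplification. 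One algebraic slip to fix: from $d\log_2 d \le \gamma \log_2 n$ and $k \ge \tfrac{b}{2}\, d\log_2 d\,(1-o(1))$ you get $|U| \ge \tfrac{b\gamma}{2}\, n\log_2 n\,(1-o(1))$, not $\tfrac{b}{2\gamma}\, n\log_2 n\,(1-o(1))$; correspondingly, $\gamma > 2c_2/b$ is equivalent to $\tfrac{b\gamma}{2} > c_2$, not to $\tfrac{b}{2\gamma} > c_2$. The correct constant $\tfrac{b\gamma}{2}$ still exceeds $c_2$ under your stated constraint, and $\tfrac{b\gamma}{2}$ can be pushed up to $\tfrac{b}{2(b+1)} \to \tfrac12$ as $b \to \infty$, so the argument stands once the constant is repaired.
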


\begin{proof}
	Let $D=2^d$.  Consider the $(2D+1)^d$ integer points in $[-D,D]^d$.  Their
	squared Euclidean norms are at most $d D^2$.  By the pigeonhole principle,
	there is a set $S \subset [-D,D]^d$ consisting of lattice points with the
	same Euclidean norm $ R \le D \sqrt{d}$, whose cardinality
	\[
		|S| \ge \frac{(2D+1)^d-1}{d D^2} > 4^{(d^2 - d - \log d)/2}.
	\]
	Note that $r=\max_{\vu \in S}\|\vu\|_\infty \le D$.  We have constructed $m$
	unit vectors in $\RR^n$ that are not selectively balancing, where $n=L^d$ and
	\begin{align*}
		m > & \tfrac{1}{2} (d^2 - d - \log d) (L-2r)^d\\
		\ge & \tfrac{1}{2} (d^2 - d - \log d) (L-2D)^d
		\sim \tfrac{1}{2} d^2 (L-2D)^d.
	\end{align*}
	For a constant $\lambda>1$, we take $L = \lfloor D^\lambda \rfloor = \lfloor
	2^{\lambda d} \rfloor$, which is eventually bigger than $2D$.  Then $n \sim
	2^{\lambda d^2}$ and $\log n \sim \lambda d^2$, hence
	\[
		\lim_{d \to \infty} \frac{m}{n \log n} > \frac{1}{2 \lambda}.
	\]
	We conclude that, as long as $c_2 < 1/2\lambda$, there are more than $c_2 n
	\log n$ unit vectors that are not selectively balancing for sufficiently
	large integers of the form $n = \lfloor 2^{\lambda d} \rfloor^d$.

	If $n$ is sufficiently large, we can always find an integer $d$ and a
	constant $\mu \in (\sqrt\lambda, \lambda)$ such that $\lfloor 2^{\mu d}
	\rfloor^d \le n \le (\lfloor 2^{\mu d} \rfloor + 1)^d$.  Hence for any $c_2 <
	1/2\lambda$, we have $\sigma(n) > c_2 n \log n$ for sufficiently large $n$.
	This finishes the proof since we can choose $\lambda$ to be arbitrarily close
	to $1$.
\end{proof}

\begin{remark}
	We give credit to the anonymous referee for this choice of $S$, which helped
	improving the proposition.  In a preliminary version, we used another $S$,
	and proved for any $c_2 < 1/3e^2$ that $\sigma(n) > c_2 n \log n$ for
	infinitely many $n$.
\end{remark}

\begin{remark}
	In the construction, we could also replace $2$ by any integer $p>2$ and,
	correspondingly, $4$ by $p^2$.  In particular, if we take an \emph{odd}
	integer $p \ge 5$, we obtain a set of unit vectors that is \emph{strictly}
	not selectively balancing: a linear combination $\vv$ of $U$ with
	coefficients $-1$, $0$ or $1$ has Euclidean norm $1$ only if $\vv \in \pm U$.
	Our current proof for this fact is however too long and does not fit into
	this short note.

	Note that $m$ unit vectors in $\RR^n$ that are \emph{strictly} not
	selectively balancing imply a ball packing in $\RR^n$ whose tangency graph is
	a $m$-cube.  Then we conclude the following by Proposition~5
	of~\cite{kang2011}: there is a constant $c$ such that, for infinitely many
	$n$, the $(c n \log n)$-cube admits a dot product representation in $\RR^n$.
	This is actually the initial motivation of our investigation.
\end{remark}

\bibliography{References}

\end{document}